\documentclass[a4paper,12pt]{article}

\usepackage{amsmath,amssymb,amsthm,mathtools}
\usepackage{geometry}
\usepackage{booktabs}
\usepackage{enumitem}
\usepackage{hyperref}
\usepackage{xcolor}

\hypersetup{
  colorlinks=true,
  linkcolor=blue!55!black,
  citecolor=blue!55!black,
  urlcolor=blue!55!black,
  pdftitle={The SMFCE Paradigm: A Unified Modeling Framework Based on State, Cost, and Extremal Evolution},
  pdfauthor={Wei Ba, Ang Fu, Shengsong Kong, Zhongchuan Fu}
}
\setlist{itemsep=0.25em,topsep=0.4em}
\numberwithin{equation}{section}

\newtheorem{principle}{Modeling Principle}[section]

\newtheorem{lemma}[principle]{Lemma}
\newtheorem{proposition}[principle]{Proposition}

\theoremstyle{definition}
\newtheorem{definition}[principle]{Definition}
\theoremstyle{remark}
\newtheorem{remark}[principle]{Remark}

\DeclareMathOperator*{\argmin}{arg\,min}
\DeclareMathOperator{\Crit}{Crit}
\DeclareMathOperator{\Hess}{Hess}
\DeclareMathOperator{\Exp}{Exp}
\DeclareMathOperator{\Retr}{Retr}
\DeclareMathOperator{\Proj}{Proj}

\title{The SMFCE Paradigm: A Unified Modeling Framework Based on State, Cost, and Extremal Evolution}
\author{Wei Ba\textsuperscript{1}\quad Ang Fu\textsuperscript{1}\quad Shengsong Kong\textsuperscript{2}\quad Zhongchuan Fu\textsuperscript{2*}\\[0.5em]
	\small \textsuperscript{1}Independent Researcher\\[0.5em]
\small \textsuperscript{2}Harbin Institute of Technology, Harbin, Heilongjiang 150001, China\\[0.5em]
\small \textsuperscript{*}Corresponding author. E-mail: \href{mailto:fuzhongchuan@hit.edu.cn}{fuzhongchuan@hit.edu.cn}}
\date{}

\begin{document}

\maketitle

\begin{abstract}
This paper proposes a unified modeling framework centered on ``state--cost--extremal evolution,'' referred to as the SMFCE paradigm. The paradigm represents the effective state of a system as a point in a state space, characterizes state cost by a single-valued scalar field, and describes system evolution through local cost descent or local extremum selection. Its mathematical core employs classical tools from differential geometry, gradient systems, and dynamical systems. The contribution of this paper is not to reintroduce these foundational results, but to organize them into two operational modeling principles. On finite-dimensional smooth Riemannian manifolds under appropriate regularity conditions, the path independence of cost differences and the cost monotonicity of negative gradient flows follow directly from classical results, while the descent property of discrete local-minimum transitions follows directly from the definition of a minimum. For same-level iterative emergence and present-centeredness, the paper provides conditional mathematical formulations using, respectively, parameterized gradient systems and bifurcation theory, and Markov processes and receding-horizon optimization. The paper further introduces three layered metrics---cost quantity, directed coupling degree, and commitment fulfillment rate---at the state, transition, path, and system levels. Finally, the paper discusses semantic modeling of large language model training and inference within this framework, explicitly distinguishes classical mathematical results, conditional propositions, and empirical adaptation models, and provides reproducible empirical testing procedures for the two modeling principles.

\noindent\textbf{Keywords:} SMFCE paradigm; Riemannian manifold; cost scalar field; gradient flow; same-level iterative emergence; directed coupling degree; commitment fulfillment rate; large language models
\end{abstract}

\section{Introduction}

Complex systems commonly involve problems of state representation, evaluation mechanisms, local decision-making, and the formation of macroscopic structures. Without a unified set of mathematical objects, these problems can easily remain at the level of conceptual analogy. The SMFCE paradigm uses state spaces, cost scalar fields, and local extremal evolution as a common language, seeking to provide a unified modeling structure for continuous systems, discrete decision systems, and semantic processes in large language models.

The aim of this paper is not to claim that all systems inherently obey the same dynamics. Rather, it is to provide a set of testable modeling conditions: when a system admits a sufficient state representation, its cost can be represented as a state function, and its local evolution can be approximated by a cost-descent process, the SMFCE framework may be used to analyze its path properties, stability, cross-system correspondences, and multiscale behavior.

State manifolds, exact differential forms, negative gradient flows, and Lyapunov monotonicity are all classical topics in differential geometry and dynamical systems\cite{lee2013,absil2008,khalil2002}. Accordingly, this paper does not claim these mathematical objects themselves as original contributions. Its contributions are limited to the following three aspects:
\begin{enumerate}
  \item abstracting the evolution mechanisms of complex systems into two modeling principles---``sufficient state representation'' and ``cost-extremal evolution''---and providing corresponding data-driven testing procedures;
  \item providing an illustrative instantiation of the general paradigm in the semantic setting of large language models by taking semantic equivalence classes as state units, semantic entropy as the core cost component, and a training--inference dual-timescale account of semantic-field shaping and local evolution;
  \item defining three layered metrics---cost quantity, directed coupling degree, and commitment fulfillment rate---to describe state improvement, the standardized advantage of a local transition, and the extent to which actual coupling attains its reference level, respectively.
\end{enumerate}

Within the landscape of related research, three bodies of work are most closely connected to this paper. First, Riemannian manifold optimization\cite{absil2008} focuses on the design of numerical optimization algorithms on constrained manifolds, with particular attention to convergence and computational efficiency. It provides the mathematical tools for gradient evolution used in this paper, but it does not offer universal criteria for state representation or an empirical testing framework tailored to complex-system modeling. Second, the free-energy principle and active inference\cite{friston2010} use variational free energy to construct a unified model of cognition and brain function. This framework, however, is deeply tied to assumptions about Bayesian generative models, is largely limited in applicability to intelligent systems, and has not established a standardized modeling and testing workflow. Third, geometric approaches to cognition, including the recently proposed geometric theory of cognition\cite{ale2025}, apply Riemannian geometry to the interpretation of human cognitive processes. These are domain-specific models rather than a general cross-system methodology. In contrast to these bodies of work, the central contribution of this paper is neither a new mathematical tool nor a domain-specific model. It systematizes classical tools from differential geometry and dynamical systems into a general, cross-domain, and testable modeling paradigm, thereby providing a unified language and operational standards for modeling, comparing, and validating different complex systems.

The remainder of the paper is organized as follows. Section~\ref{sec:preliminaries} introduces the mathematical objects and basic notation. Section~\ref{sec:principles} presents the two modeling principles and their empirical tests. Section~\ref{sec:theorems} summarizes the classical foundational results. Section~\ref{sec:conditional} discusses two classes of conditional propositions. Section~\ref{sec:llm} presents a semantic model of large language model training and inference. Section~\ref{sec:boundary} summarizes the theoretical levels and scope of applicability. Section~\ref{sec:metrics} presents the three core metrics and their domains of definition.

\section{Mathematical Preliminaries and Basic Objects}
\label{sec:preliminaries}

\subsection{Scenario-State Space}

Let the effective state space of the system be a finite-dimensional smooth manifold $M$. A point $p\in M$ represents the complete effective state of the system at a given time. To define lengths, angles, gradients, and directions of steepest descent, equip $M$ with a Riemannian metric $g$, thereby obtaining the Riemannian manifold $(M,g)$.

If the system is subject to feasibility constraints, let
\begin{equation}
  M_{\mathrm{adm}}\subseteq M
\end{equation}
denote the admissible state domain. Depending on the system, $M_{\mathrm{adm}}$ may be a submanifold with boundary, a closed constraint set, or a stratified space.

\subsection{Cost Scalar Field}

Let a single evolution cycle be $I=[t_0,t_0+\Delta T]$. Define the cost scalar field by
\begin{equation}
  C:M\times I\longrightarrow\mathbb{R}.
\end{equation}
For a fixed time $t\in I$, write
\begin{equation}
  C_t:=C(\cdot,t):M\longrightarrow\mathbb{R}.
\end{equation}
$C_t(p)$ denotes the cost of state $p$ at time $t$. The cost may correspond to energy, loss, uncertainty, task residual, degree of constraint violation, or a weighted combination of these quantities.

When discussing path integrals and gradient flows, assume $C_t\in C^1(M)$. When discussing Hessians, critical-point degeneracy, and bifurcations, further assume $C_t\in C^2(M)$.

\subsection{Continuous Trajectories and Discrete Neighborhoods}

A continuous evolution trajectory is denoted by
\begin{equation}
  \gamma:I\longrightarrow M,
\end{equation}
and a discrete evolution sequence by $p_0,p_1,p_2,\ldots\in M$. For a discrete state $p_j$, let $N(p_j)$ denote its locally reachable neighborhood. When an exact local-minimum rule is used, assume that $N(p_j)\cap M_{\mathrm{adm}}$ is finite, or that this set is compact and $C$ is continuous on it.

\section{Two Foundational Modeling Principles}
\label{sec:principles}

\begin{principle}[Scenario-State and Positional Uniqueness]
Within a single evolution cycle $I$, the effective scenario-state of the system can be represented by a position in the state space $M$. For every fixed $(p,t)\in M\times I$, the cost value $C(p,t)\in\mathbb{R}$ is uniquely determined. Equivalently, for every fixed time $t$, $C_t$ is a single-valued state function defined on $M$.
\end{principle}

This principle requires the state variables to contain all effective information needed to determine the cost. If the same apparent state at the same time corresponds to different costs, then the current state representation omits historical, environmental, or latent variables, and the state space should be augmented. Formally, introduce a raw history space $\mathcal H$ and a state representation $\pi:\mathcal H\to M$, and require that there exist a function $C$ such that
\begin{equation}
  C_{\mathrm{raw}}(h,t)=C(\pi(h),t),
  \qquad h\in\mathcal H.
\end{equation}

\paragraph{Empirical test of state-representation sufficiency}
When the cost can only be obtained through a noisy observation $Y$, Principle~3.1 can be operationalized as the null hypothesis
\begin{equation}
  H_0^{(1)}:\quad
  Y\perp\!\!\!\perp H\mid\bigl(\pi(H),t\bigr),
  \label{eq:state-sufficiency-null}
\end{equation}
meaning that, conditional on the state representation and time, the complete history provides no additional information about the observed cost. In practice, $Y$ should be an independent observation or a predefined empirical cost, and the test should proceed as follows:
\begin{enumerate}
  \item On the training sample, fit a baseline model $m_0$ using only $(\pi(H),t)$ and an extended model $m_1$ that additionally uses candidate historical, environmental, or latent variables $Z$.
  \item On a held-out sample, compute the predictive risks $R_0$ and $R_1$, and assess $\Delta R=R_0-R_1$ using cross-validation, permutation tests, or conditional-independence tests.
  \item If $\Delta R$ is significantly greater than a prespecified tolerance, or if the residuals of $m_0$ remain significantly associated with $Z$, reject the sufficiency of the current state representation and augment $M$. Otherwise, report confidence intervals and an identifiable error bound, without interpreting ``failure to reject'' as a rigorous proof.
\end{enumerate}
If $C$ itself is a proxy defined from $p=\pi(h)$, then equation~\eqref{eq:state-sufficiency-null} degenerates into a tautology by construction and cannot establish the sufficiency of the state representation. The test target must include an observed outcome or cost measure independent of that definition.

The cost field is allowed to vary slowly during a cycle. On a compact relevant region $K\subseteq M$ containing the trajectory, define
\begin{equation}
  \varepsilon_I
  =\sup_{(p,t)\in K\times I}
  \left|\frac{\partial C(p,t)}{\partial t}\right|.
\end{equation}
The fundamental theorem of calculus gives
\begin{equation}
  |C(p,t)-C(p,t_0)|
  \leq \varepsilon_I|t-t_0|.
\end{equation}
Given a tolerance $\eta>0$, if $\varepsilon_I\Delta T\leq\eta$, then the frozen field $C_{t_0}$ may be used to approximate $C_t$ over a single cycle.

If approximate stability of the descent direction is also required, one may further assume
\begin{equation}
  \sup_{p\in K}
  \left\|\nabla_g C(p,t)-\nabla_g C(p,t_0)\right\|_g
  \leq\delta_I.
\end{equation}

\begin{principle}[Cost-Extremal Evolution]
In the absence of strong external intervention, under a stable constraint structure and with a cost field that is approximately static over a single cycle, the system evolves according to a local cost-extremum rule. In the continuous setting, this is represented by motion along feasible negative-gradient directions; in the discrete setting, it is represented by selecting lower-cost states from a locally reachable neighborhood. This principle specifies only a local evolution rule and does not presuppose a globally optimal terminal state.
\end{principle}

In the unconstrained setting, the continuous dynamics associated with the frozen field $C:=C_{t_0}$ are
\begin{equation}
  \dot\gamma(t)=-k\nabla_g C(\gamma(t)),
  \qquad k>0,
  \label{eq:gradient-flow}
\end{equation}
where the Riemannian gradient is defined by
\begin{equation}
  g_p(\nabla_g C(p),v)=dC_p(v),
  \qquad v\in T_pM.
\end{equation}
If the vector field $-\nabla_g C$ is locally Lipschitz, then a Picard--Lindel\"of-type existence-and-uniqueness result implies that, for any given initial state, equation~\eqref{eq:gradient-flow} has a unique local solution\cite{coddington1955}.

When the system is constrained to $M_{\mathrm{adm}}$, the projected dynamics should be written as
\begin{equation}
  \dot\gamma(t)
  =\Proj_{T_{M_{\mathrm{adm}}}(\gamma(t))}
  \left[-k\nabla_g C(\gamma(t))\right],
  \label{eq:projected-flow}
\end{equation}
where $T_{M_{\mathrm{adm}}}(p)$ is the feasible tangent cone. If the feasible set is a smooth submanifold without boundary, its tangent cone reduces to a linear tangent space. For a nonsmooth closed constraint set, one may instead use, under appropriate regularity conditions,
\begin{equation}
  \dot\gamma(t)
  \in-k\nabla_g C(\gamma(t))-N_{M_{\mathrm{adm}}}(\gamma(t)),
  \qquad \gamma(t)\in M_{\mathrm{adm}},
\end{equation}
where $N_{M_{\mathrm{adm}}}$ denotes the corresponding normal cone. These are standard modeling forms for projected dynamical systems and differential inclusions\cite{dupuis1993,aubin1984}.

The discrete local-extremum rule is
\begin{equation}
  p_{j+1}\in
  \argmin_{p\in N(p_j)\cap M_{\mathrm{adm}}}C(p).
  \label{eq:discrete-argmin}
\end{equation}
Another class of discrete models is Riemannian gradient descent,
\begin{equation}
  p_{j+1}=\Exp_{p_j}\left(-\eta_j\nabla_gC(p_j)\right),
\end{equation}
or the exponential map may be replaced by a retraction $\Retr$. Descent for this model additionally requires gradient smoothness and suitable step-size conditions; it cannot be identified unconditionally with equation~\eqref{eq:discrete-argmin}.

\paragraph{Trajectory test of the gradient-evolution hypothesis}
Given $C$ and $g$ specified independently or estimated using only a training set, estimate velocities $\widehat v_i$ from trajectory observations $(p_i,t_i)$. In the unconstrained case, minimize over $k\geq0$ the normalized residual
\begin{equation}
  \mathcal R_{\mathrm{grad}}
  =
  \frac{\sum_i w_i
  \left\|\widehat v_i+k\nabla_gC(p_i,t_i)\right\|_g^2}
  {\sum_i w_i\|\widehat v_i\|_g^2},
  \label{eq:gradient-residual-test}
\end{equation}
and also report the directional cosine between $\widehat v_i$ and $-\nabla_gC(p_i,t_i)$, the proportion of actual cost-decreasing transitions, and their confidence intervals. In the constrained case, replace $-\nabla_gC$ with the projected vector field in equation~\eqref{eq:projected-flow}. In the discrete case, test the frequency of transitions to lower-cost states, the likelihood relative to an $\argmin$ model or a random-choice model, and predictive error on held-out trajectories. Principle~3.2 should be regarded as empirically supported only when these measures meet prespecified tolerances and exhibit a stable advantage over time-varying, stochastic, or nongradient alternatives. If $C$, $g$, and $k$ are all freely fitted using the same trajectories, sample splitting and complexity control are required to prevent the gradient-flow hypothesis from becoming unfalsifiable through overfitting.

\section{Foundational Mathematical Results}
\label{sec:theorems}

\begin{proposition}[Classical result: path independence of cost differences]
Fix $t\in I$. If $C_t\in C^1(M)$ is a globally single-valued function, then for every piecewise smooth path $\gamma:[a,b]\to M$,
\begin{equation}
  \int_\gamma dC_t
  =C_t(\gamma(b))-C_t(\gamma(a)).
\end{equation}
In particular, for every closed path $\Gamma$, $\oint_\Gamma dC_t=0$.
This is a direct application of the fundamental theorem for line integrals of exact $1$-forms on smooth manifolds\cite{lee2013}. A proof is included for completeness.
\end{proposition}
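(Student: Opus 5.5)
I will prove the statement by reducing the line integral to a one-dimensional integral and applying the ordinary fundamental theorem of calculus piece by piece. Since $\gamma$ is piecewise smooth, there is a partition $a=s_0<s_1<\cdots<s_m=b$ such that each restriction $\gamma_i:=\gamma|_{[s_{i-1},s_i]}$ is smooth, with one-sided derivatives at the endpoints. By definition of the line integral of a $1$-form along a piecewise smooth path,
\begin{equation*}
  \int_\gamma dC_t=\sum_{i=1}^m\int_{s_{i-1}}^{s_i}(dC_t)_{\gamma(s)}\bigl(\gamma'(s)\bigr)\,ds .
\end{equation*}

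The key step is the chain rule on each piece. Set $f_i:=C_t\circ\gamma_i:[s_{i-1},s_i]\to\mathbb{R}$. Since $C_t\in C^1(M)$ and $\gamma_i$ is smooth, $f_i$ is $C^1$. Its derivative is $f_i'(s)=(dC_t)_{\gamma(s)}(\gamma'(s))$, which is just the definition of the differential acting on the velocity vector, and can be checked in local coordinates. The integrand on each subinterval is therefore the continuous function $f_i'$. The classical fundamental theorem of calculus then gives
\begin{equation*}
  \int_{s_{i-1}}^{s_i}f_i'(s)\,ds=C_t(\gamma(s_i))-C_t(\gamma(s_{i-1})).
\end{equation*}
Summing over $i$ produces a telescoping sum, so the total equals $C_t(\gamma(b))-C_t(\gamma(a))$.

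The closed-path claim follows immediately. For $\Gamma$ with $\Gamma(a)=\Gamma(b)$, the right-hand side vanishes. Global single-valuedness of $C_t$, as guaranteed by Principle~3.1, is exactly what makes the endpoint values well defined, and hence makes the telescoping valid.

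The only point needing care, and it is mild, is the behaviour at the breakpoints $s_i$. There $\gamma'$ may be discontinuous. However, the integrand is only required on each closed subinterval using one-sided derivatives, and continuity of $\gamma$ at $s_i$ ensures that consecutive terms cancel. I would also remark that the result is invariant under reparametrization, so the integral depends only on the oriented path; this is not needed for the proof.
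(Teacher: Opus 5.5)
Your proof is correct and follows the same route as the paper: the chain rule identifies the integrand $dC_t|_{\gamma(s)}(\dot\gamma(s))$ with $\frac{d}{ds}C_t(\gamma(s))$, and the fundamental theorem of calculus then gives the endpoint difference. Your explicit partition into smooth pieces, with a telescoping sum, is a useful refinement, because the paper's one-line computation over $[a,b]$ tacitly treats $\gamma$ as smooth on the whole interval.
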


\begin{proof}
By the definition of a line integral and the chain rule,
\begin{align}
  \int_\gamma dC_t
  &=\int_a^b dC_t|_{\gamma(s)}\bigl(\dot\gamma(s)\bigr)\,ds\\
  &=\int_a^b\frac{d}{ds}C_t(\gamma(s))\,ds\\
  &=C_t(\gamma(b))-C_t(\gamma(a)).
\end{align}
When $\Gamma(a)=\Gamma(b)$, the right-hand side is zero.
\end{proof}

\begin{remark}
If only a closed $1$-form $\omega$ is given, without a global potential function, then $\omega$ can in general be written as $dC_U$ only locally. Deducing global path independence from local closedness requires appropriate topological conditions\cite{lee2013}.
\end{remark}

\begin{lemma}[Classical result: monotonicity under a frozen cost field]
Let $C\in C^1(M)$ and let $\gamma$ satisfy equation~\eqref{eq:gradient-flow}. Then
\begin{equation}
  \frac{d}{dt}C(\gamma(t))
  =-k\left\|\nabla_gC(\gamma(t))\right\|_g^2
  \leq0.
  \label{eq:lyapunov-descent}
\end{equation}
Therefore, $C$ is a Lyapunov function for the negative gradient flow.
This is the standard Lyapunov property of gradient systems\cite{absil2008,khalil2002}.
\end{lemma}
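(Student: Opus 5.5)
The plan is to compute the derivative of $C$ along the trajectory with the chain rule, then rewrite the differential using the defining identity of the Riemannian gradient. Since $C\in C^1(M)$ and $\gamma$ is a solution of equation~\eqref{eq:gradient-flow}, $\gamma$ is $C^1$ on its interval of existence. Hence $t\mapsto C(\gamma(t))$ is $C^1$, and
\begin{equation*}
  \frac{d}{dt}C(\gamma(t))=dC_{\gamma(t)}\bigl(\dot\gamma(t)\bigr).
\end{equation*}
This is the same chain-rule step used in the proof of Proposition~4.1.

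Next I apply the definition of the Riemannian gradient, $g_p(\nabla_gC(p),v)=dC_p(v)$, with $p=\gamma(t)$ and $v=\dot\gamma(t)$. This turns the right-hand side into $g_{\gamma(t)}\bigl(\nabla_gC(\gamma(t)),\dot\gamma(t)\bigr)$. Substituting the flow equation $\dot\gamma(t)=-k\nabla_gC(\gamma(t))$ and using bilinearity of $g_{\gamma(t)}$ gives
\begin{equation*}
  -k\,g_{\gamma(t)}\bigl(\nabla_gC(\gamma(t)),\nabla_gC(\gamma(t))\bigr)=-k\left\|\nabla_gC(\gamma(t))\right\|_g^2 .
\end{equation*}
This quantity is nonpositive because $k>0$ and $g$ is positive definite. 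That proves equation~\eqref{eq:lyapunov-descent}.

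For the Lyapunov conclusion, I will note two things. First, $C$ is nonincreasing along every trajectory. Second, the derivative vanishes exactly where $\nabla_gC=0$, that is, on $\Crit(C)$. These are the defining properties of a (weak) Lyapunov function for the flow. If one wants a Lyapunov function in the strict sense near an isolated minimizer $p^*$, one uses $C-C(p^*)$, which is then positive on a punctured neighborhood where $p^*$ is the unique minimizer.

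The only point requiring care is regularity. The computation needs $\gamma$ to be differentiable and the composite to be differentiable, and both hold under the stated hypotheses. The computation is valid on whatever interval the solution exists; local Lipschitz continuity of $-\nabla_gC$ is needed only for existence and uniqueness, not for the identity itself. I expect no genuine obstacle: the result is a one-line consequence of the chain rule and the gradient definition.
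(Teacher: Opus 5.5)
Your proposal is correct and follows the paper's proof essentially step for step: the chain rule gives $dC_{\gamma(t)}(\dot\gamma(t))$, the defining identity of the Riemannian gradient converts this to an inner product, and substituting the flow equation yields $-k\|\nabla_g C(\gamma(t))\|_g^2\leq 0$. Your added remarks on regularity and on the weak versus strict Lyapunov property are accurate supplements that the paper leaves implicit.
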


\begin{proof}
By the chain rule, the definition of the Riemannian gradient, and equation~\eqref{eq:gradient-flow},
\begin{align}
  \frac{d}{dt}C(\gamma(t))
  &=dC_{\gamma(t)}\bigl(\dot\gamma(t)\bigr)\\
  &=g_{\gamma(t)}\left(\nabla_gC(\gamma(t)),
    -k\nabla_gC(\gamma(t))\right)\\
  &=-k\left\|\nabla_gC(\gamma(t))\right\|_g^2\leq0.
\end{align}
\end{proof}

\begin{proposition}[Descent condition for a time-varying field]
If $C=C(p,t)$ and $\dot\gamma=-k\nabla_gC(\gamma(t),t)$, then
\begin{equation}
  \frac{d}{dt}C(\gamma(t),t)
  =\frac{\partial C}{\partial t}(\gamma(t),t)
  -k\|\nabla_gC(\gamma(t),t)\|_g^2.
\end{equation}
Therefore, a sufficient condition for the actual time-varying cost to remain nonincreasing is
\begin{equation}
  \frac{\partial C}{\partial t}
  \leq k\|\nabla_gC\|_g^2
\end{equation}
along the trajectory.
\end{proposition}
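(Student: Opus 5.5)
The plan is to repeat the computation in the proof of Lemma~4.3, but now for the composite map $t\mapsto (\gamma(t),t)$ in the product $M\times I$. I assume $C\in C^1(M\times I)$, jointly in $(p,t)$, so that the chain rule applies. Writing $\Phi(t)=C(\gamma(t),t)$, the differential of $C$ at $(\gamma(t),t)$ acting on the tangent vector $(\dot\gamma(t),1)\in T_{\gamma(t)}M\oplus T_t\mathbb{R}$ splits into a spatial part and a temporal part. This gives
\begin{equation}
  \Phi'(t)=d(C_t)_{\gamma(t)}\bigl(\dot\gamma(t)\bigr)+\frac{\partial C}{\partial t}(\gamma(t),t).
\end{equation}
Here $d(C_t)$ is the differential of the frozen field $C_t$ on $M$, which is exactly the object used to define $\nabla_g C(\cdot,t)$.

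Next I will rewrite the spatial term using the Riemannian gradient at the fixed time $t$, via $g_p(\nabla_gC(p,t),v)=d(C_t)_p(v)$. Substituting $\dot\gamma=-k\nabla_gC(\gamma(t),t)$ gives
\begin{equation}
  g\bigl(\nabla_gC,-k\nabla_gC\bigr)=-k\|\nabla_gC(\gamma(t),t)\|_g^2,
\end{equation}
which is the same step as in Lemma~4.3. This yields the stated identity.

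The sufficient condition then follows directly. If $\partial_tC\le k\|\nabla_gC\|_g^2$ at every point of the trajectory, then $\Phi'(t)\le 0$ on the interval of existence. A $C^1$ function with nonpositive derivative is nonincreasing, by the mean value theorem.

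The only delicate point is justifying the chain rule on $M\times I$. The joint $C^1$ regularity of $C$ must be stated explicitly, since the earlier standing assumption only gives $C_t\in C^1(M)$ for each fixed $t$. The trajectory $\gamma$ must also be $C^1$, which holds for a classical solution of the ODE.

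For the chain rule itself, I plan to work in a local chart. There it reduces to the multivariable chain rule for $(x(t),t)\mapsto C(x,t)$ on an open subset of $\mathbb{R}^n\times\mathbb{R}$. Because the partial derivatives are continuous, the map is Fréchet differentiable.
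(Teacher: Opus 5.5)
Your proof is correct and matches the argument the paper intends. The paper states this proposition without a separate proof; the evident reasoning is the chain-rule computation of Lemma~4.3 with the extra $\partial C/\partial t$ term, which is exactly what you carry out. Your explicit note that the chain rule needs joint $C^1$ regularity of $C$ on $M\times I$ is a worthwhile addition, since the paper's standing assumption only gives $C_t\in C^1(M)$ for each fixed $t$.
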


\begin{remark}[Descent of discrete local-minimum transitions]
Suppose the minimum in equation~\eqref{eq:discrete-argmin} exists and $p_j\in N(p_j)\cap M_{\mathrm{adm}}$. Then
\begin{equation}
  C(p_{j+1})\leq C(p_j).
\end{equation}
Because $p_j$ itself belongs to the feasible candidate set and $p_{j+1}$ is a minimizer over that set, the conclusion follows directly from the definition of a minimum.
\end{remark}

Define the critical set by
\begin{equation}
  \Crit(C)=\{p\in M:\nabla_gC(p)=0\}.
\end{equation}
If the forward trajectory of the negative gradient flow exists for all $t\geq0$ and remains in a compact sublevel set, LaSalle's invariance principle implies that its $\omega$-limit set is contained in $\Crit(C)$\cite{lasalle1960}. If the critical points are isolated or a condition such as the \L{}ojasiewicz gradient inequality holds\cite{lojasiewicz1984}, convergence to a single point may be obtained under the corresponding assumptions. Monotone cost descent alone does not guarantee that a limit point is a local minimum; the type of critical point must still be determined by Hessian and stability analyses\cite{absil2008,khalil2002}.

\section{Two Classes of Conditional Derived Propositions}
\label{sec:conditional}

\subsection{Same-Level Iterative Emergence}

Here ``same-level'' means that state evolution and bifurcation occur within a fixed effective state space $(M,g)$ and its family of parameterized cost fields. The parameter $\mu$ may change the cost landscape, critical points, stable branches, and basins of attraction, but equation~\eqref{eq:slow-fast} always assumes a fixed $(M,g)$.

Let the parameterized family of cost fields be $C_\mu:M\to\mathbb{R}$, and consider the slow--fast system
\begin{equation}
  \dot\gamma(t)=-k\nabla_gC_{\mu(t)}(\gamma(t)),
  \qquad
  \dot\mu(t)=\epsilon r(\mu(t)),
  \qquad 0<\epsilon\ll1.
  \label{eq:slow-fast}
\end{equation}

\begin{proposition}[Candidate bifurcation condition for same-level emergence]
If an equilibrium branch undergoes a local bifurcation near $(p^*,\mu^*)$ triggered by a zero eigenvalue, then
\begin{equation}
  \nabla_gC_{\mu^*}(p^*)=0,
  \qquad
  \det\left(\Hess_gC_{\mu^*}(p^*)\right)=0.
  \label{eq:bifurcation-candidate}
\end{equation}
Equation~\eqref{eq:bifurcation-candidate} is only a candidate degeneracy condition. Whether a bifurcation actually occurs, and its type, further depends on transversality, higher-order nondegeneracy conditions, and center-manifold or normal-form analysis\cite{kuznetsov2004}.
\end{proposition}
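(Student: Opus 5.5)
The plan is to read the hypothesis ``an equilibrium branch undergoes a local bifurcation near $(p^*,\mu^*)$ triggered by a zero eigenvalue'' concretely. It means two things. First, $p^*$ is an equilibrium of the fast subsystem of \eqref{eq:slow-fast} with $\mu$ frozen at $\mu^*$. Second, the linearization of the vector field $X_{\mu^*}:=-k\nabla_gC_{\mu^*}$ at $p^*$ has a zero eigenvalue. The two conditions in \eqref{eq:bifurcation-candidate} then come out one at a time, using only the definitions of the Riemannian gradient and Hessian already fixed in Section~\ref{sec:preliminaries}, with $C_{\mu^*}\in C^2(M)$ as assumed for bifurcation discussions.

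\textbf{Step 1 (gradient condition).} Equilibria of $\dot\gamma=-k\nabla_gC_{\mu^*}(\gamma)$ are exactly the zeros of the vector field. Since $k>0$, this gives $\nabla_gC_{\mu^*}(p^*)=0$, so $p^*\in\Crit(C_{\mu^*})$. If the hypothesis is instead stated as a limit of equilibria $(p_n,\mu_n)\to(p^*,\mu^*)$ on the branch, the same conclusion follows by continuity of $(p,\mu)\mapsto\nabla_gC_\mu(p)$.

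\textbf{Step 2 (Hessian condition).} At a zero $p^*$ of a vector field $X$, the linearization $DX(p^*):T_{p^*}M\to T_{p^*}M$ is intrinsically defined, independent of connection and chart. The reason is that for any connection, $\nabla_vX$ at a zero of $X$ differs from the chart derivative only by Christoffel terms multiplied by $X(p^*)=0$. Taking the Levi-Civita connection, $D(\nabla_gC)(p^*)=\nabla(\nabla_gC)(p^*)$, and this is the Hessian operator $\Hess_gC_{\mu^*}(p^*)$, i.e.\ the operator associated via $g$ with the symmetric bilinear form $\nabla^2C$. Hence the linearization is $DX_{\mu^*}(p^*)=-k\,\Hess_gC_{\mu^*}(p^*)$. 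This operator is $g$-self-adjoint, so its eigenvalues are real. A zero eigenvalue of $-k\Hess$ is then a zero eigenvalue of $\Hess$, again because $k\neq0$, and therefore $\det\Hess_gC_{\mu^*}(p^*)=0$. Here the determinant is understood either as the determinant of the operator or, equivalently in sign and vanishing, of its matrix in any $g$-orthonormal basis.

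\textbf{Step 3 (remark on necessity).} Optionally, one can justify why a bifurcation must be triggered this way when it occurs at all: if $\Hess_gC_{\mu^*}(p^*)$ were invertible, the implicit function theorem applied to $(p,\mu)\mapsto\nabla_gC_\mu(p)$ in a chart would give a locally unique smooth equilibrium branch with constant index. That rules out a local bifurcation. Self-adjointness also excludes Hopf-type (purely imaginary) triggers, so a zero eigenvalue is the only possibility. The statement itself, however, only asks for the implication above.

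The main obstacle, modest as it is, is Step~2: one has to justify cleanly that the linearization of the gradient field at a critical point equals the Riemannian Hessian operator, without relying on a chart-dependent computation. I would handle it with the ``connection-independence at zeros'' argument above. Everything else is immediate, and I would close by echoing the statement's caveat that \eqref{eq:bifurcation-candidate} is necessary but not sufficient, with sufficiency requiring transversality and normal-form analysis.
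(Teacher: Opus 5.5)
Your proposal is correct, and it matches the reasoning the paper takes for granted. The paper gives no proof of this proposition beyond the citation to Kuznetsov; it implicitly uses two facts: equilibria of the frozen gradient flow are critical points of $C_{\mu^*}$, and the linearization there is $-k\Hess_gC_{\mu^*}(p^*)$, so a zero eigenvalue forces $\det\Hess_gC_{\mu^*}(p^*)=0$. Your Steps~1--2 establish exactly these two facts, with a clean chart-free justification of the linearization at a zero, and Step~3 is a correct but optional converse that the statement does not require.
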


If a new asymptotically stable equilibrium $p_{\mathrm{new}}$ appears after the bifurcation and $\gamma(t_0)$ lies in its basin of attraction $\mathcal B(p_{\mathrm{new}})$, then
\begin{equation}
  \lim_{t\to\infty}\gamma(t)=p_{\mathrm{new}}.
\end{equation}
This provides a mathematical prototype for the formation of a new stable branch, attractor, or observable ordered structure within a fixed state space. Emergence cannot be declared inevitable on the basis of Hessian degeneracy alone.

\subsection{Present-Centeredness and the Limits of Long-Term Prediction}

The local evolution of the system may be written in continuous form as
\begin{equation}
  \dot\gamma(t)=F(\gamma(t),t),
\end{equation}
or in discrete form as
\begin{equation}
  p_{j+1}=\Phi(p_j),
  \qquad\text{or}\qquad
  p_{j+1}\sim P(\cdot\mid p_j).
\end{equation}
When $p_j$ contains all the information needed to determine the conditional distribution of the next step, the process has a Markov representation\cite{norris1997}. If the state omits relevant historical variables, it can only be regarded as approximately Markovian.

\begin{proposition}[Prediction boundary under sensitive dependence]
If a discrete dynamical system, or the corresponding random dynamical system, has a maximal Lyapunov exponent $\lambda_{\max}>0$, then a sufficiently small initial perturbation may approximately satisfy, over a finite regime,
\begin{equation}
  \|\delta p_n\|
  \approx\|\delta p_0\|e^{\lambda_{\max}n},
\end{equation}
thereby limiting the accuracy of long-term prediction\cite{eckmann1985}. A positive Lyapunov exponent does not follow automatically from a negative gradient flow; it must be established separately through a time-varying field, stochastic perturbations, multi-agent coupling, nonsmooth switching, or some other mechanism.
\end{proposition}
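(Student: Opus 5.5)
To prove the proposition, I would make it precise in the standard setting of a smooth discrete map $\Phi:M\to M$ (the random case then follows fiberwise via the cocycle $\Phi_\omega$ and Oseledets' multiplicative ergodic theorem). Write $\delta p_n$ for the tangent vector obtained by propagating $\delta p_0\in T_{p_0}M$ through the linearization: $\delta p_n = D\Phi^n_{p_0}\,\delta p_0$. The maximal Lyapunov exponent is then
\begin{equation*}
  \lambda_{\max}=\lim_{n\to\infty}\frac1n\log\bigl\|D\Phi^n_{p_0}\bigr\|_g .
\end{equation*}
By the Oseledets theorem, this limit exists for almost every $p_0$ with respect to an invariant measure under the integrability condition $\log^+\|D\Phi\|\in L^1$. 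The first step is to record the Oseledets consequence that, for $\delta p_0$ outside a proper subspace (the next-lower Oseledets subspace), one has $\frac1n\log\|\delta p_n\|\to\lambda_{\max}$. Equivalently, $\|\delta p_n\|=\|\delta p_0\|e^{\lambda_{\max}n+o(n)}$. This is the precise content of the asymptotic ``$\approx$'' in the statement: it holds at exponential rate, up to subexponential factors.

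The second step passes from the linearized perturbation to an actual perturbation $\Phi^n(p_0+\delta p_0)-\Phi^n(p_0)$, working in charts or via $\Exp$. I would Taylor-expand with a $C^2$ remainder bound and use a discrete Gr\"onwall-type argument. If $L$ bounds $\|D^2\Phi\|$ on a compact region containing the orbit, the nonlinear error at step $n$ is $O\bigl(\|\delta p_0\|^2e^{2\lambda' n}\bigr)$ for any $\lambda'>\lambda_{\max}$. This error stays small relative to $\|\delta p_0\|e^{\lambda_{\max}n}$ as long as $n\lesssim \frac{1}{\lambda'}\log(1/\|\delta p_0\|)$, which makes the ``finite regime'' explicit. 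As a consequence, if the measurement resolution is $\Delta$, the prediction horizon is at most of order $\lambda_{\max}^{-1}\log(\Delta/\|\delta p_0\|)$, which gives the claimed limit on long-term prediction.

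The third step addresses the final sentence of the proposition. By Lemma (monotonicity under a frozen cost field) together with the LaSalle discussion, bounded trajectories of $\dot\gamma=-k\nabla_gC$ accumulate on $\Crit(C)$. For gradient systems with compact sublevel sets, invariant measures are therefore supported on critical points. At an isolated critical point, the linearization $-k\Hess_gC$ is self-adjoint, so the exponents are $-k$ times the Hessian eigenvalues. A positive exponent can then occur only at a saddle, which carries no recurrent chaotic dynamics. I would therefore argue that sustained sensitive dependence, meaning a positive exponent for a nonatomic invariant measure, cannot arise from the frozen gradient flow and must come from an additional mechanism.

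The main obstacle is the second step: making the control of the nonlinear remainder uniform along the orbit. The Oseledets constants are only tempered, not uniform. I would first try Pesin-type tempered estimates (Lyapunov charts). If those prove too heavy, I would simply assume uniform hyperbolicity on the relevant compact set, which matches the proposition's hedged phrasing.
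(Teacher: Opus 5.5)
The paper gives no proof of this proposition. It states the growth law with hedges (``may approximately'', ``over a finite regime''), defers to Eckmann--Ruelle, and treats the last sentence as a methodological caveat. Your Steps~1--2 are therefore a genuine rigorization, and they are sound.

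The obstacle you flag in Step~2 is milder than you think. For a fixed Oseledets-regular $p_0$ you need neither Lyapunov charts nor uniform hyperbolicity. In a chart, write $d_n=\Phi^n(p_0+\delta p_0)-\Phi^n(p_0)$, $u_n=D\Phi^n_{p_0}\delta p_0$, and let $R_k$ be the quadratic Taylor remainder at step $k$. Then $d_n-u_n=\sum_{k<n}D\Phi^{n-1-k}_{\Phi^{k+1}p_0}R_k$. The tempered bound $\|D\Phi^m_{\Phi^jp_0}\|\le C_\epsilon(p_0)e^{(\lambda_{\max}+\epsilon)m+\epsilon j}$ closes an induction on $\|d_k\|$. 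For $\delta p_0\notin V_2(p_0)$, the Oseledets lower bound $\|u_n\|\ge C_\epsilon(p_0)^{-1}\|\delta p_0\|e^{(\lambda_{\max}-\epsilon)n}$ controls the relative error, and ``sufficiently small'' may depend on $p_0$ and $\epsilon$. The relative error is of order $\|\delta p_0\|e^{(2\lambda'-\lambda_{\max})n}$, so the horizon is $(1-\theta)(2\lambda'-\lambda_{\max})^{-1}\log(1/\|\delta p_0\|)$. That is slightly shorter than your $\lambda'^{-1}\log(1/\|\delta p_0\|)$, where the bound equals $\|\delta p_0\|^{\lambda_{\max}/\lambda'-1}\gg1$.

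The real gap is in Step~3. Your conclusion is that a frozen gradient flow has no nonatomic invariant measure with a positive exponent. This is false once critical points may be non-isolated, and the paper does not exclude that. Take $C(p)=h(|p|^2)$ on $\mathbb{R}^2$ with $h(s)=(s-2)^4-(s-2)^2$. Its sublevel sets are compact, and every point of the circle $|p|^2=2$ is an equilibrium. Normalized arc length on that circle is therefore a nonatomic invariant measure. On the circle $\Hess C=-8\,p\,p^{\top}$, so the exponents are $16k$ (radial) and $0$. Your isolated-point computation works only because isolatedness makes $\Crit(C)$ countable, so every invariant measure is purely atomic and your claim becomes vacuous.

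Two smaller points in the same step. Positive exponents occur at local maxima as well as saddles. Also, LaSalle concerns $\omega$-limit sets of orbits. To place invariant measures on $\Crit(C)$ you need Poincar\'e recurrence, or the identity $\int\arctan(C\circ\varphi_t)\,d\mu=\int\arctan(C)\,d\mu$ for the flow $\varphi_t$, combined with strict decrease of $C$ off $\Crit(C)$.

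More fundamentally, your own Hessian computation shows that frozen gradient flows do produce the finite-regime separation of the first half of the proposition, e.g.\ for two points straddling the stable manifold of a saddle. So exponents cannot be the criterion that separates gradient dynamics from chaos. What is true, and suffices, is this: every invariant measure is carried by $\Crit(C)$, where the flow is the identity. Hence every invariant measure has zero entropy, and any positive exponent is just $-k$ times a negative Hessian eigenvalue at an unstable equilibrium. The proposition's literal claim that positivity ``does not follow automatically'' needs only one example, such as a nondegenerate minimum, where all exponents are negative.
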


Under incomplete information, computational limitations, or a continuously changing environment, present-centeredness can be represented by receding-horizon optimization with state-transition constraints:
\begin{align}
  \min_{a_j,\ldots,a_{j+H}}\quad
  &\sum_{r=j}^{j+H}C(p_r,a_r),\\
  \text{s.t.}\quad
  &p_{r+1}=F(p_r,a_r),\qquad
  p_r\in M_{\mathrm{adm}}.
\end{align}
This model follows the standard receding-horizon logic of model predictive control\cite{rawlings2017}. It emphasizes verifiable decisions over a finite window using currently available information, without claiming that local greediness is equivalent to global optimality.

\section{Semantic Modeling for LLM Training and Inference}
\label{sec:llm}

This section provides an illustrative instantiation of the SMFCE modeling paradigm developed above in the context of large language models. The basic adaptation conventions are as follows: semantic equivalence classes are taken as the basic units of semantic states, each corresponding to a single point on the manifold; semantic entropy serves as the core cost scalar for constructing a single-valued scalar field over the semantic state space\cite{kuhn2023,farquhar2024}; and the semantic generation process corresponds to local cost-minimizing evolution on the manifold. A complete system of quantitative measures, theoretical derivations, and empirical tests will be developed systematically in future dedicated work.

\subsection{Token-Level Cost}

Given a context $x$ and a candidate token $y$, define
\begin{equation}
  C_{\mathrm{token}}(y\mid x)
  =-\log p_\theta(y\mid x).
\end{equation}
The uncertainty of the output distribution is
\begin{equation}
  H(p_\theta)
  =-\sum_y p_\theta(y\mid x)
  \log p_\theta(y\mid x).
\end{equation}
Under greedy decoding, selecting the token with the highest conditional probability is equivalent to selecting the token with the smallest $C_{\mathrm{token}}$. When temperature sampling, top-$k$, or top-$p$ truncation is used, the next token should be described by a conditional transition distribution rather than by a deterministic local-minimum rule.

\subsection{Semantic-Level Cost Field}

Let semantic objects be represented through an embedding map
\begin{equation}
  \phi:\mathcal{X}_{\mathrm{sem}}\longrightarrow\mathbb{R}^d,
\end{equation}
and let $\mathcal{S}\subseteq\mathbb{R}^d$ be the effective semantic state set. An approximate semantic cost field may be constructed as
\begin{equation}
  C_{\mathrm{sem}}:\mathcal{S}\longrightarrow\mathbb{R},
\end{equation}
for example,
\begin{equation}
  C_{\mathrm{sem}}(s)
  =\alpha H_{\mathrm{sem}}(s)
  +\beta R_{\mathrm{task}}(s)
  +\lambda V_{\mathrm{constraint}}(s).
\end{equation}
Here $H_{\mathrm{sem}}$ denotes semantic uncertainty, $R_{\mathrm{task}}$ denotes the task residual, and $V_{\mathrm{constraint}}$ denotes the degree of constraint violation. Each component must be operationally defined through observations, estimators, or proxy indicators appropriate to the specific task.

Semantic entropy is the core foundational component of the cost field and corresponds to the intrinsic uncertainty of a semantic state\cite{kuhn2023,farquhar2024}. The semantic cost-field framework can further support quantitative methods for multiple downstream tasks, including hallucination detection, generation-quality assessment, semantic-consistency checking, and anomalous-content identification, thereby adapting the same modeling framework to multiple scenarios.

\subsection{A Dual-Timescale Interpretation of Training and Inference}

During training, variation in the parameters $\theta_t$ shapes the cost field:
\begin{equation}
  C_{\mathrm{sem}}=C_{\mathrm{sem}}(\cdot;\theta_t).
\end{equation}
During inference, local state transitions are executed while the parameters are approximately fixed, which may be idealized as
\begin{equation}
  s_{j+1}
  \in
  \argmin_{s\in N(s_j)}
  C_{\mathrm{sem}}(s;\theta_{\mathrm{fixed}}).
\end{equation}
Thus, training corresponds to the slow-timescale shaping of the cost-field structure, whereas inference corresponds to fast-timescale evolution within a given field. The semantic manifold and semantic cost field are testable approximate models, not rigorous objects automatically guaranteed by the architecture of a neural network. If inference includes online learning, tool feedback, or continuously changing context, a time-varying field $C_{\mathrm{sem}}(s,t)$ should be used, or the state variables should be augmented.

From the perspective of the SMFCE paradigm, training and inference share the structural form of ``state--cost--local extremal evolution'' and can therefore be described within a unified language, but they are not strictly isomorphic. The state of training lies primarily in parameter or model space, and its evolution changes the cost field itself; the state of inference lies primarily in semantic state space, and its evolution occurs within a given cost field while the parameters remain approximately fixed. More precisely, they are two instantiations of the same paradigm on different state spaces and timescales. An approximate SMFCE structural homomorphism could be discussed only after a mapping between training states and inference states has been specified and shown to preserve cost relations and evolution directions within prescribed error bounds. This paper does not claim a strict isomorphism between training and inference.

This dual-timescale modeling framework can directly reuse the state-sufficiency and trajectory-fitting test procedures of the SMFCE paradigm, enabling quantitative falsification of the modeling assumptions and validation of their effectiveness.

\section{Theoretical Levels, Testability, and Scope of Applicability}
\label{sec:boundary}

\subsection{Rigorous Mathematical Results}

On a finite-dimensional smooth Riemannian manifold, with a globally single-valued cost function of the required regularity, well-defined constraints, and suitable compactness conditions, the following statements follow directly from classical results or definitions:
\begin{enumerate}
  \item the path integral of $dC$ depends only on the endpoints;
  \item the negative gradient flow of a frozen cost field makes the cost nonincreasing;
  \item an exact discrete local-minimum transition makes the cost nonincreasing;
  \item under precompactness and related conditions, the limit set of a gradient flow lies in the critical set;
\end{enumerate}

\subsection{Conditional Mathematical Results}

Same-level iterative emergence and long-term prediction boundaries require additional structure. Same-level emergence depends on a parameterized dynamical system within a fixed effective state space, critical degeneracy, and nondegenerate bifurcation conditions. Long-term sensitive dependence requires a positive Lyapunov exponent or another error-amplification mechanism.

\subsection{Empirical Adaptation Models}

Semantic manifolds and semantic cost fields for large language models generally belong to the empirical adaptation layer. Their validity requires, at a minimum, testing whether the state representation is sufficient, whether the cost function is observable and stable, whether actual transitions align with cost-descent directions, and whether the error of the one-cycle static approximation is controlled.

If these conditions fail, another model should be used: time-varying, stochastic, nonsmooth, or higher-dimensional. A fixed smooth gradient-flow interpretation should not be imposed on the system.

\section{Three Core Metrics}
\label{sec:metrics}

This section introduces three operational metrics within the preceding ``state--cost--extremal-evolution'' framework. They describe, respectively, the cost improvement of a state relative to a baseline, the standardized advantage of a selected transition within a local candidate set, and the extent to which actual coupling attains its reference level.

\subsection{Domain and Common Conditions}

Fix a time $t$, or a single evolution cycle satisfying the static approximation, and let $C_t$ denote the current cost field. For a discrete transition, define the admissible candidate set by
\begin{equation}
  A(u):=N(u)\cap M_{\mathrm{adm}}
  =\{v_1,\ldots,v_m\},
  \qquad m\ge 2.
  \label{eq:metric-domain}
\end{equation}
Throughout this section, $C_t$ takes finite real values on all evaluated states. For an infinite candidate set, the compactness and continuity conditions in Section~\ref{sec:preliminaries} ensure attainment of a minimum, while a probability measure on the candidate set specifies the corresponding mean and variance. The following definitions use a finite candidate set.

\subsection{Cost Quantity}

\begin{definition}[Cost quantity]
Given a baseline state $p_0$ and a current state $p$, define the cost quantity from $p_0$ to $p$ by
\begin{equation}
  \Delta C_t(p;p_0):=C_t(p_0)-C_t(p).
  \label{eq:cost-quantity}
\end{equation}
\end{definition}

Because $C_t$ is a finite-valued, single-valued function, $\Delta C_t$ is well defined and remains invariant when an arbitrary constant is added to $C_t$. A value $\Delta C_t>0$ represents a cost improvement relative to the baseline, $\Delta C_t=0$ represents no change in cost, and $\Delta C_t<0$ represents a cost increase.

\subsection{Directed Coupling Degree}

For the candidate costs $c_i:=C_t(v_i)$, define the population mean and standard deviation by
\begin{equation}
  \mu_t(u):=\frac{1}{m}\sum_{i=1}^{m}c_i,
  \qquad
  \sigma_t(u):=
  \left[\frac{1}{m}\sum_{i=1}^{m}
  \bigl(c_i-\mu_t(u)\bigr)^2\right]^{1/2}.
  \label{eq:candidate-statistics}
\end{equation}
Assume $\sigma_t(u)>0$ and let
\begin{equation}
  v\in\argmin_{w\in A(u)}C_t(w).
\end{equation}

\begin{definition}[Directed coupling degree]
The directed coupling degree of the selected transition $u\to v$ is
\begin{equation}
  S_t(u\to v):=
  \frac{\mu_t(u)-C_t(v)}{\sigma_t(u)}.
  \label{eq:directed-coupling}
\end{equation}
\end{definition}

$S_t$ is dimensionless. It measures how many standard deviations the cost of the selected state lies below the mean of the candidate set, thereby representing the standardized advantage of the selected transition relative to the candidate distribution. Under a positive affine transformation of candidate costs, $C\mapsto aC+b$ with $a>0$, $S_t$ remains unchanged. When the minimum is not unique, all tied minimizing states have the same value of $S_t$, and the realized transition is determined by a prespecified tie-breaking rule.

\subsection{Commitment Fulfillment Rate}

To compare reference capability with actual performance, let $C_{\mathrm{pure}}$ be the reference cost field and $C_{\mathrm{actual}}$ the actual cost field. The reference field $C_{\mathrm{pure}}$ is defined by a model under ideal conditions, obtained from a calibration experiment, or given as the convergent limit of a family of cost fields $C^{(k)}$ under a prespecified topology.

For a link $e=(u,v)$ that is explicitly matched across the two fields, write
\begin{equation}
  P_e:=S_{\mathrm{pure}}(e),
  \qquad
  A_e:=S_{\mathrm{actual}}(e).
\end{equation}
The single-link commitment fulfillment rate is defined when the candidate states and the link are consistently matched across the two fields, both standard deviations are positive, and $P_e>0$.

\begin{definition}[Single-link commitment fulfillment rate]
\begin{equation}
  F(e):=\frac{A_e}{P_e}.
  \label{eq:edge-fulfillment}
\end{equation}
\end{definition}

Under the calibration convention $0\le A_e\le P_e$, one has $F(e)\in[0,1]$. The equality $F(e)=1$ indicates that the actual coupling degree reaches the reference value, and a larger $F(e)$ represents a higher realization of the reference coupling level on that link.

\begin{definition}[Path-level commitment fulfillment rate]
For a path $\Gamma=(e_1,\ldots,e_n)$ consisting of $n$ aligned links, assume $\sum_{i=1}^{n}P_{e_i}^{2}>0$ and define
\begin{equation}
  F(\Gamma):=
  \frac{\sum_{i=1}^{n}P_{e_i}A_{e_i}}
       {\sum_{i=1}^{n}P_{e_i}^{2}}
  =
  \frac{\sum_{i=1}^{n}P_{e_i}^{2}F(e_i)}
       {\sum_{i=1}^{n}P_{e_i}^{2}}.
  \label{eq:path-fulfillment}
\end{equation}
\end{definition}

Equation~\eqref{eq:path-fulfillment} aggregates the link-level rates with weights $P_{e_i}^{2}$, thereby assigning greater influence to links with higher reference coupling degrees.

\begin{definition}[System-level commitment fulfillment rate]
Given a probability measure $\mathbb P$ on the path space and assuming that $F(\Gamma)$ is integrable with respect to $\mathbb P$, define
\begin{equation}
  F_{\mathrm{system}}
  :=\mathbb E_{\Gamma\sim\mathbb P}[F(\Gamma)].
  \label{eq:system-fulfillment}
\end{equation}
\end{definition}
For a finite test set $\{\Gamma_1,\ldots,\Gamma_K\}$ with uniform weights, the empirical estimator is
\begin{equation}
  \widehat F_{\mathrm{system}}
  :=\frac{1}{K}\sum_{k=1}^{K}F(\Gamma_k).
  \label{eq:empirical-system-fulfillment}
\end{equation}

\section{Conclusion}

This paper formulates SMFCE as a state--cost--extremal-evolution framework grounded in two modeling principles. The principle of scenario-state and positional uniqueness specifies the modeling conditions for state representations and single-valued cost fields, while the principle of cost-extremal evolution specifies the local evolution mechanism of a system. On this basis, path independence and cost monotonicity under a frozen field are supplied by classical mathematical results, while descent under a discrete local-minimum transition follows directly from the definition of a minimum. These results form the mathematical foundation of the framework but are not claimed as original theorems of this paper. Same-level iterative emergence describes the formation of new stable branches, attractors, or ordered structures within a fixed effective state space, while present-centeredness describes the boundary of local decision-making.

Large language model training and inference can be given a dual-timescale semantic interpretation within this framework, but the semantic state space and semantic cost field must still be established through experimental operationalization and error testing.

Cost quantity, directed coupling degree, and commitment fulfillment rate provide layered metrics at the state, transition, path, and system levels.

The theoretical value of SMFCE therefore lies not in reducing all complex systems to unconditional gradient flows, but in providing a unified modeling language with explicit theoretical levels, inspectable conditions, and measurable errors.

\section*{Author Contributions}

Wei Ba proposed the core research paradigm and theoretical framework and was responsible for the overall research design and the drafting of the main body of the paper. Ang Fu verified, supplemented, and standardized the mathematical formulas throughout the manuscript and contributed to content expansion and logical refinement. Shengsong Kong developed and validated the experimental framework for LLM-related research and assisted with proofreading the mathematical derivations throughout the manuscript. Zhongchuan Fu oversaw the academic quality of the research, provided guidance throughout the process, and assumed responsibility for submission correspondence and long-term academic communication.

\end{document}